\newcommand{\sect}[1]{\section{#1}\setcounter{equation}{0}}
\font\mbn=msbm10 scaled \magstep1
\font\mbs=msbm7 scaled \magstep1
\font\mbss=msbm5 scaled \magstep1
\def\mbf{\fam\mbff}
\def\Co{{\mbf C}}
\def\Di{{\mbf D}}
\newtheorem{Th}{Theorem}[section]
\newtheorem{R}[Th]{Remark}
\author{Alexander Brudnyi\thanks{Research supported in part by NSERC.\newline 
2000 {\em Mathematics Subject Classification}. Primary 37L10,
Secondary 34C07.
\newline 
{\em Key words and phrases}. Center problem, the group of rectangular paths, iterated integrals.
}\\
Department of Mathematics and Statistics\\
University of Calgary, Calgary\\
Canada}
\title{Center Problem for the Group of Rectangular Paths}
\date{} 
\begin{document} 


\maketitle

\begin{abstract}
{We solve the center problem for ODEs $\frac{dv}{dx}=\sum_{i=1}^{\infty}a_{i}(x)\,v^{i+1}$ such that the first integrals of vectors of their coefficients determine rectangular paths in finite dimensional complex vector spaces.}
\end{abstract}

\sect{\hspace*{-1em}. Introduction}
{\bf 1.1.} The classical Poincar\'{e} Center-Focus problem for planar vector fields 
\begin{equation}\label{e3}
\frac{dx}{dt}=-y+F(x,y),\ \ \ \frac{dy}{dt}=x+G(x,y),
\end{equation}
where $F$ and $G$ are real polynomials of a given degree without constant and linear terms asks about conditions on $F$ and $G$ under which all trajectories of (\ref{e3}) situated in a small neighbourhood of $0\in\mathbb R^{2}$ are closed. It can be reduced  passing to polar coordinates $(x,y)=(r\cos\phi, r\sin\phi)$ in (\ref{e3}) and expanding the right-hand side of the resulting equation as a series in $r$ (for $F$, $G$ with sufficiently small coefficients) to the center problem for the ordinary differential equation
\begin{equation}\label{e1}
\frac{dv}{dx}=\sum_{i=1}^{\infty}a_{i}(x)v^{i+1},\ \ \ x\in [0,2\pi],
\end{equation}
whose coefficients are trigonometric polynomials depending polynomially on the coefficients of $F$ and $G$.

More generally, consider equation (\ref{e1}) with coefficients $a_{i}$ from the Banach space $L^{\infty}(I_{T})$ of bounded measurable complex-valued functions on $I_{T}:=[0,T]$ equipped with the supremum norm. Condition
\begin{equation}\label{e2}
\sup_{x\in I_{T},\, i\in\mathbb N}\sqrt[i]{|a_{i}(x)|}<\infty
\end{equation}
guarantees that (\ref{e1}) has Lipschitz solutions on $I_{T}$ for all sufficiently small initial values.
By $X$ we denote the complex Fr\'{e}chet space of sequences $a=(a_{1},a_{2},\dots)$ satisfying (\ref{e2}).
We say that equation (\ref{e1}) determines a {\em center} if every solution $v$ of (\ref{e1}) with a sufficiently small initial value satisfies $v(T)=v(0)$. By ${\cal C}\subset X$ we denote the set of centers of (\ref{e1}). The center problem is: {\em given $a\in X$ to determine whether it belongs to ${\cal C}$}. In particular, for system (\ref{e3}) with $F$ and $G$ being real polynomials of degree $d$ the coefficients of the corresponding equation (\ref{e1}) determine a polynomial map $P_d:\mathbb R^{N(d)}\to X$ where $N(d):=(d-1)(d+4)$ is the dimension of the space of coefficients of pairs $(F,G)$. Then the Poincar\'{e} problem is {\em to characterize the elements of the set} $P_d^{-1}(P_d(\mathbb R^{N(d)})\cap {\cal C})\subset\mathbb R^{N(d)}$.\\
\\
{\bf 1.2.} Let us rehearse some results on the structure of the set ${\cal C}$ obtained in [Br1]-[Br2].
For each $a=(a_1,a_2,\dots)\in X$ the first integral $\widetilde a(x):=(\int_0^x a_1(s)\,ds,\int_0^x a_2(s)\,ds,\dots)$ determines a path $\widetilde a:[0,T]\to\Co^\infty$. We consider the set of paths with the standard operations of multiplication and taking the inverse.\footnote{The product of paths $\widetilde a\circ\widetilde b$ is the path obtained by translating $\widetilde a$ so that its beginning meets the end of $\widetilde b$ and then
forming the composite path. Similarly, $\widetilde a^{-1}$ is the path obtained by translating $\widetilde a$ so
that its end meets $0$ and then taking it with the opposite orientation.} Then we introduce similar operations $*$  and $^{-1}$ on $X$ so that the correspondence $a\mapsto\widetilde a$ is a monomorphism of semigroups.

Let $\Co\!\left\langle X_{1},X_{2},\dots\right\rangle$ be the associative algebra with unit $I$ of complex noncommutative polynomials in $I$ and free noncommutative variables $X_{1}, X_{2},\dots$ (i.e., there are no nontrivial relations between these variables). By $\Co\!\left\langle X_{1},X_{2},\dots\right\rangle\![[t]]$ we denote the associative algebra of formal power series in $t$ with coefficients from
$\Co\!\left\langle X_{1},X_{2},\dots\right\rangle$. Also, by ${\cal A}\subset \Co\!\left\langle X_{1},X_{2},\dots\right\rangle\![[t]]$ we denote the subalgebra of series $f$ of the form
\begin{equation}\label{e2.6}
f=c_{0}I+\sum_{n=1}^{\infty}\left(\ \!\sum_{i_{1}+\dots +i_{k}=n}c_{i_{1},\dots, i_{k}}X_{i_{1}}\cdots X_{i_{k}}
\right)t^{n}
\end{equation}
with $c_{0},c_{i_{1},\dots, i_{k}}\in\Co$ for all $i_{1},\dots, i_{k},\, k\in\mathbb N$.

By $G\subset{\cal A}$ we denote the closed subset of elements $f$ of form (\ref{e2.6}) with $c_{0}=1$.  We equip ${\cal A}$
with the adic topology determined by powers of the ideal ${\cal I}\subset {\cal A}$ of elements of form (\ref{e2.6}) with $c_0=0$. Then $(G,\cdot)$ is a topological group. Its Lie algebra ${\cal L}_{G}\subset {\cal A}$ is the vector space of elements of form (\ref{e2.6}) with $c_{0}=0$;  here for $f,g\in {\cal L}_{G}$ their product is defined by the formula $[f,g]:=f\cdot g-g\cdot f$. Also, the map $\exp:{\cal L}_{G}\to G$,
$\exp(f):=e^{f}=\sum_{n=0}^{\infty}\frac{f^{n}}{n!}$, is a homeomorphism.

For an element
$a=(a_{1},a_{2},\dots)\in X$ consider the equation 
\begin{equation}\label{e2.8}
F'(x)=\left(\ \!\sum_{i=1}^{\infty}a_{i}(x)\ \!X_i\, t^{i}\right)F(x),
\ \ \ \ \ x\in I_{T}.
\end{equation}
This can be solved by Picard iteration to obtain a solution $F_{a}: I_{T}\to G$, $F_{a}(0)=I$, whose coefficients in expansion in $X_{1}, X_{2},\dots$ and $t$ are Lipschitz functions on $I_{T}$.  We set 
\begin{equation}\label{e2.9}
E(a):=F_{a}(T),\ \ \ a\in X.
\end{equation}
Then
\begin{equation}\label{e2.10}
E(a*b)=E(a)\cdot E(b),\ \ \ a,b\in X.
\end{equation}
An explicit calculation leads to the formula
\begin{equation}\label{e2.11}
E(a)=I+\sum_{n=1}^{\infty}\left(\ \!\sum_{i_{1}+\cdots +i_{k}=n}I_{i_{1},\dots, i_{k}}(a)X_{i_{1}}\cdots X_{i_{k}}\right)t^{n}
\end{equation}
where
\begin{equation}\label{E2.1}
I_{i_{1},\dots, i_{k}}(a):=\int\cdots\int_{0\leq s_{1}\leq\cdots\leq s_{k}\leq T}a_{i_{k}}(s_{k})\cdots a_{i_{1}}(s_{1})\ \!ds_{k}\cdots ds_{1}
\end{equation}
are basic iterated integrals on $X$.

The kernel of the homomorphism $E:X\to G$ is called {\em the set of universal centers} of equation (\ref{e1}) and is denoted by ${\cal U}$. The elements of ${\cal U}$ are of a topological nature, see [Br1] for their description. The set of equivalence classes $G(X):=X/\sim$ with respect to the equivalence relation $a\sim b \leftrightarrow a*b^{-1}\in {\cal U}$ has the structure of a group so that the factor-map $\pi:X\to G(X)$ is an epimorphism of semigroups. Moreover, for each function $I_{i_{1},\dots, i_{k}}$ on $X$ there exists a function $\widehat I_{i_{1},\dots, i_{k}}$ on $G(X)$ such that $\widehat I_{i_{1},\dots, i_{k}}\circ\pi=I_{i_{1},\dots, i_{k}}$.
In particular, there exists a monomorphism of groups $\widehat E:G(X)\to G$ defined by $E=\widehat E\circ\pi$, i.e.,
\begin{equation}\label{e2.12}
\widehat E(g)=I+\sum_{n=1}^{\infty}\left(\ \!\sum_{i_{1}+\cdots +i_{k}=n}\widehat I_{i_{1},\dots, i_{k}}(g)X_{i_{1}}\cdots X_{i_{k}}\right)t^{n},\ \ \ g\in G(X). 
\end{equation}

We equip $G(X)$ with the weakest topology in which all functions
$\widehat I_{i_{1},\dots, i_{k}}$ are continuous. Then $G(X)$ is a topological group and $\widehat E$ is a continuous embedding. The completion of the image $\widehat E(G(X))\subset {\cal A}$ is called {\em the group of formal paths in} $\mathbb C^\infty$ and is denoted by $G_f(X)$. The group $G_f(X)$ is defined by the Ree shuffle relations for the iterated integrals. Its Lie algebra ${\cal L}_{Lie}$ consists of all {\em Lie elements} of ${\cal A}$, see [Br2] for details. 

Let $G[[r]]$ be the set of formal complex power series $f(r)=r+\sum_{i=1}^{\infty}d_{i}r^{i+1}$.
Let $d_{i}:G[[r]]\to\mathbb C$ be such that $d_{i}(f)$ is the $(i+1)$st coefficient in the series expansion of $f$. We equip $G[[r]]$ with the weakest topology in which all $d_{i}$ are continuous functions and consider the multiplication $\circ$ on $G[[r]]$ defined by the composition of series. Then $G[[r]]$ is a separable topological group. By $G_{c}[[r]]\subset G[[r]]$ we denote the subgroup of power series locally convergent near $0$ equipped with the induced topology. Next, we define the map $P:X\to G[[r]]$ by the formula
\begin{equation}\label{e3.6}
P(a)(r):=r+\sum_{i=1}^{\infty}\left(\,\sum_{i_{1}+\cdots +i_{k}=i}p_{i_{1},\dots, i_{k}}(i)\cdot I_{i_{1},\dots, i_{k}}(a)\right)r^{i+1}
\end{equation}
where
$$
p_{i_{1},\dots, i_{k}}(t):=(t-i_{1}+1)(t-i_{1}-i_{2}+1)\cdots (t-i+1).
$$
Then $P(a*b)=P(a)\circ P(b)$ and $P(X)=G_{c}[[r]]$. 
Moreover, let $v(x;r;a)$, $x\in I_{T}$, be the Lipschitz solution of equation (\ref{e1}) with initial value $v(0;r;a)=r$. Clearly for every $x\in I_{T}$ we have $v(x;r;a)\in G_{c}[[r]]$. It is shown in [Br1] that $P(a)=v(T;\cdot;a)$ (i.e., $P(a)$ is the {\em first return map} of (\ref{e1})). In particular, we have
\begin{equation}\label{e3.7}
a\in {\cal C}\ \Longleftrightarrow\ \sum_{i_{1}+\cdots +i_{k}=i}p_{i_{1},\dots, i_{k}}(i)\cdot I_{i_{1},\dots, i_{k}}(a)\equiv 0\ \ \ {\rm for\ all}\ \ \ i\in\mathbb N.
\end{equation}

Equation (\ref{e3.6}) implies that there exists a continuous homomorphism of groups $\widehat P: G(X)\to G[[r]]$ such that $P=\widehat P\circ\pi$. Identifying $G(X)$ with its image under $\widehat E$ we extend $\widehat P$ by continuity to $G_{f}(X)$ retaining the same symbol for the extension.

We set $\widehat{\cal C}:=\pi({\cal C})$ and define $\widehat{\cal C}_f$ as the completion of $\widehat E(\widehat{\cal C})$. Then $\widehat{\cal C}_f$ coincides with the kernel of the homomorphism $\widehat P$. The groups $\widehat{\cal C}$ and $\widehat{\cal C}_f$ are called {\em the groups of centers and formal centers} of equation (\ref{e1}).

It was established in [Br1], [Br2] that
\begin{equation}
\begin{array}{c}
\displaystyle
g\in \widehat{\cal C}_{f}\ \ \ \Longleftrightarrow\ \sum_{i_{1}+\cdots +i_{k}=i}p_{i_{1},\dots, i_{k}}\widehat I_{i_{1},\dots, i_{k}}(g)\equiv 0\ \ \ {\rm for\ all}\ \ \ i\in\mathbb N\ \ \ \Longleftrightarrow\\
\\
\displaystyle
\sum_{i_{1}+\cdots +i_{k}=i}p_{i_{1},\dots, i_{k}}(i)\cdot \widehat I_{i_{1},\dots, i_{k}}(g)=0\ \ \ {\rm for\ all}\ \ \ i\in\mathbb N
\end{array}
\end{equation}
and the Lie algebra ${\cal L}_{\widehat{\cal C}_{f}}\subset {\cal L}_{Lie}$ of $\widehat{\cal C}_{f}$
consists of elements
$$
\sum_{n=1}^{\infty}\left(\ \!\sum_{i_{1}+\dots +i_{k}=n}c_{i_{1},\dots, i_{k}}[X_{i_{1}},[X_{i_{2}},[\ \cdots , [X_{i_{k-1}},X_{i_{k}}]\cdots \ ]]]\right) t^{n}
$$
such that
\begin{equation}\label{e3.21}
\begin{array}{c}
\displaystyle 
\sum_{i_{1}+\cdots + i_{k}=n}c_{i_{1},\dots, i_{k}}\cdot\gamma_{i_{1},\dots, i_{k}}=0\ \ \ {\rm for\ all}\ \ \ n\in\mathbb N \ \ \ {\rm where}\ \ \ \gamma_{n}=1\ \ \ {\rm and}\\
\\
\gamma_{i_{1},\dots, i_{k}}=
(i_{k-1}-i_{k})(i_{k-1}+i_{k}-i_{k-2})\cdots
(i_{2}+\cdots +i_{k}-i_{1})\ \ \ {\rm for}\ \ \ k\geq 2.
\end{array}
\end{equation}
In particular, the map $\exp: {\cal L}_{\widehat{\cal C}_{f}}\to  \widehat{\cal C}_{f}$ is a homeomorphism.
\sect{\hspace*{-1em}. Main results}
{\bf 2.1.} Consider elements $g\in G_{f}(X)$ of the form 
\begin{equation}\label{eq2.1}
g=e^{h}\ \ \ {\rm where}\ \ \ h=\sum_{i=1}^{\infty}c_{i}X_{i}t^{i},\ \ \
c_{i}\in\mathbb C,\ i\in\mathbb N.
\end{equation}
By $PL\subset G_{f}(X)$ we denote the group generated by all such $g$. It is called the {\em group of piecewise linear paths in} $\mathbb C^{\infty}$.\footnote{This reflects the fact that the first integrals of the vectors of coefficients of formal equations \eqref{e1} corresponding to elements of $PL$ are piecewise linear paths in $\mathbb C^{\infty}$.} It was shown in [Br2, Proposition 3.14] that the group $\widehat{\cal C}_{PL}:=PL\cap \widehat{\cal C}_f$ of piecewise linear centers is dense in $\widehat{\cal C}_f$. It was also asked about the structure of the set of centers represented by piecewise linear paths in $\mathbb C^n$ (i.e., represented by products of elements of form \eqref{eq2.1} with all $c_i=0$ for $i>n$). In this paper we solve this problem for the group of rectangular paths.\\
\\
{\bf 2.2.} Let $X_{rect}\subset X$ be a semigroup generated by elements $a\in X$ whose first integrals $\widetilde a$ are rectangular paths in $\mathbb C^{\infty}$ consisting of finitely many segments parallel to the coordinate axes. The image of $X_{rect}$ under homomorphism $E$ is called {\em the group of rectangular paths} and is denoted by $G(X_{rect})$. Clearly, $G(X_{rect})\subset PL$. It is generated by elements  $e^{(a_nT)\, X_n\,t^n}\in G_f(X)$, $a_n\in\mathbb C$, $n\in\mathbb N$. 
Since there are no nontrivial relations between these elements (with $a_n\neq 0$), the group $G(X_{rect})$ is isomorphic to the free product of countably many copies of $\mathbb C$. Also, the group $G(X_{rect})\, (\subset G(X))$ is dense in $G_f(X)$ and the elements $X_n\, t^n$, $n\in\mathbb N$, form a generating subset of the Lie algebra ${\cal L}_{Lie}$, see [Br2]. Moreover, for each $g\in G(X_{rect})$ the first return map $\widehat P(g)\in G_c[[r]]$ can be explicitly computed and represents an algebraic function. 
Specifically, for the equation
\begin{equation}\label{e2.1}
\frac{dv}{dx}=a_n v^{n+1}
\end{equation}
corresponding to the element $g_n:=e^{(a_nT)\, X_n\,t^n}$ an explicit calculation shows that its first return map is given by the formula
\begin{equation}\label{e2.2}
\begin{array}{l}
\displaystyle
\widehat P(g_n)(r):=\frac{r}{\sqrt[n]{1-na_n T r^n}}\\
\\
\displaystyle
=r+\sum_{j=1}^{\infty}\frac{(-1)^{j}(n(j-1)+1)(n(j-2)+1)\cdots 1}{j!}(a_n T)^j r^{nj+1}.
\end{array}
\end{equation}
Here $\sqrt[n]{\cdot}:\mathbb C\setminus (-\infty,0]\to\Co$ stands for the principal branch of the power function. Then for a generic $g\in G(X_{rect})$, the first return map
$\widehat P(g)$ is the composition of series of form (\ref{e2.2}).

Our main result is
\begin{Th}\label{te1}
The restriction $\widehat P|_{G(X_{rect})}: G(X_{rect})\to G_c[[r]]$ is a monomorphism. In particular, $\widehat {\cal C}\cap G(X_{rect})=\{1\}$ and ${\cal C}\cap X_{rect}\subset {\cal U}$. Moreover, ${\cal C}\cap X_{rect}$ consists of elements $a\in X_{rect}$ such that their first integrals $\widetilde a:[0,T]\to\mathbb C^{\infty}$ are rectangular paths modulo cancellations\footnote{i.e., forgetting sub-paths of a given path consisting of a segment and then immediately of the same segment going in the opposite direction.} representing the constant path $[0,T]\to (0,0,\dots)\in\mathbb C^{\infty}$.
\end{Th}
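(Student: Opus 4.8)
The plan is to reduce the injectivity of $\widehat P$ on $G(X_{rect})$ to a statement about compositions of the explicit algebraic series $\widehat P(g_n)$ in \eqref{e2.2}, and then to exploit the \emph{branching structure} of these functions as germs of multivalued analytic functions at $0$. Write a generic element of $G(X_{rect})$ as a reduced word $g=g_{n_1}^{(\alpha_1)}\cdots g_{n_m}^{(\alpha_m)}$ where $g_{n}^{(\alpha)}:=e^{\alpha X_n t^n}$ (with $\alpha\neq 0$, $n_j\neq n_{j+1}$), so that $\widehat P(g)=\varphi_{n_1,\alpha_1}\circ\cdots\circ\varphi_{n_m,\alpha_m}$ with $\varphi_{n,\alpha}(r)=r(1-n\alpha r^n)^{-1/n}$. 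Each $\varphi_{n,\alpha}$ extends to a multivalued analytic function on $\Co\setminus\{0\}$ whose only finite branch points, other than possibly $0$, lie at the $n$-th roots of $(n\alpha)^{-1}$; the local monodromy there is of order exactly $n$. The claim is that a nonempty reduced word cannot compose to the identity series $r$, equivalently, that the composite is never the germ of a single-valued (entire, or even rational) function unless the word is empty.

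First I would record the basic algebra: since $G(X_{rect})$ is the free product of copies of $\Co$ (stated in 2.2), $\widehat P|_{G(X_{rect})}$ is injective iff its restriction to this free product has trivial kernel, and a kernel element would be a nonempty reduced word $g$ with $\widehat P(g)(r)\equiv r$. Next I would analyze $\Phi:=\widehat P(g)$ as a germ at $0$: it is convergent near $0$ with $\Phi(r)=r+O(r^2)$, hence defines a nonconstant germ, and I would track its analytic continuation along a small loop $|r|=\varepsilon$ encircling the origin. The key computation is to see that the \emph{innermost} factor governs a genuine branch point. Continue along $\gamma$: the function $r\mapsto \varphi_{n_m,\alpha_m}(r)$ is itself single-valued near $0$ (the radical is $1-n\alpha r^n=1+O(r^n)$, away from its zeros), so encircling $0$ does not by itself produce monodromy for a single factor. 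Thus the true obstruction must come from a branch point at a \emph{nonzero} point; I would instead follow the continuation of $\Phi$ outward from $0$ toward the nearest singularity.

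Here is the heart of the argument, and the step I expect to be the main obstacle: one must show that the singularities produced by the \emph{outer} factors are not accidentally cancelled by the \emph{inner} ones. I would argue by "peeling from the outside." Let $a_1:=\varphi_{n_1,\alpha_1}$; it has a branch point of order exactly $n_1$ at each $\rho$ with $\rho^{n_1}=(n_1\alpha_1)^{-1}$, and $\rho\neq 0$. The remaining map $\psi:=\varphi_{n_2,\alpha_2}\circ\cdots\circ\varphi_{n_m,\alpha_m}$ is a nonconstant germ with $\psi(0)=0$ and $\psi(r)=r+O(r^2)$, so it is locally invertible at $0$ and its image under analytic continuation covers a full punctured neighborhood of $0$; in particular there are points $r_0$ near $0$ with $\psi(r_0)$ arbitrarily close to (but not equal to) such a $\rho$. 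Continuing $\Phi=a_1\circ\psi$ along a small loop in $r$-space whose $\psi$-image encircles $\rho$ once, the monodromy of $a_1$ at $\rho$ forces $\Phi$ to pick up a genuine order-$n_1$ branching \emph{unless} $\psi$ itself already has compensating monodromy along that loop; but then one replaces $\psi$ by its own outermost factor $a_2=\varphi_{n_2,\alpha_2}$ and repeats. Since $n_j\neq n_{j+1}$, the candidate branch-point orders at successive stages are distinct, so no telescoping cancellation of branch orders can occur, and after at most $m$ steps one exhibits a nontrivial branch point of $\Phi$, contradicting $\Phi(r)\equiv r$. Making the phrase "compensating monodromy" precise — i.e., showing that an order-$n_1$ branch point of $a_1$ upstream cannot be killed by the downstream germ $\psi$ because of the numerical mismatch of ramification indices and the freeness of the generators — is the delicate point; I would handle it by comparing the local ramification indices of $\Phi$ over its critical values, using that the composition of germs multiplies ramification indices at a common branch locus, which cannot equal $1$ when one factor contributes a prime-to-the-others factor coming from the distinct $n_j$'s.

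Once injectivity of $\widehat P|_{G(X_{rect})}$ is in hand, the two displayed consequences are immediate: $\widehat{\mathcal C}\cap G(X_{rect})=\ker(\widehat P)\cap G(X_{rect})=\{1\}$, and therefore any $a\in\mathcal C\cap X_{rect}$ has $\pi(a)=1$ in $G(X)$, i.e. $a\in\mathcal U$. For the final "moreover," I would use the description of $\mathcal U=\ker(E)$ together with the fact that for $a\in X_{rect}$ the iterated integrals $I_{i_1,\dots,i_k}(a)$ are, up to constants, the shuffle-type expressions in the edge-lengths of the rectangular path $\widetilde a$; the relation $E(a)=I$ then says precisely that all these vanish, and a combinatorial/topological argument (the same "rectangular path modulo cancellation" analysis used in [Br1] for $\mathcal U$) shows this is equivalent to $\widetilde a$ reducing, after deleting back-and-forth segments, to the constant path at the origin. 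I would invoke the description of $\mathcal U$ from [Br1] to conclude, spelling out only the rectangular-path specialization: a reduced rectangular loop based at $0$ with all iterated integrals zero must be the trivial loop.
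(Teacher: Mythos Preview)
Your monodromy approach is genuinely different from the paper's, and the step you yourself flag as ``the delicate point'' is a real gap, not a missing detail. Two concrete problems: (i) You assert that since $\psi$ covers a punctured neighbourhood of $0$, there are $r_0$ near $0$ with $\psi(r_0)$ close to a branch point $\rho$ of $a_1$. But $|\rho|^{n_1}=|n_1\alpha_1|^{-1}$ is a fixed nonzero number; $\rho$ is \emph{not} near $0$, so reaching it requires analytically continuing $\psi$ far from $0$, across its own branch locus, and then the sheet-tracking you need is exactly the hard part you have not done. (ii) The ramification heuristic is unfounded: only \emph{consecutive} indices satisfy $n_j\neq n_{j+1}$; the full list $n_1,\dots,n_m$ need be neither pairwise distinct nor pairwise coprime, so there is no ``prime-to-the-others factor'' to invoke, and in general compositions of ramified germs can have trivial global monodromy despite each factor being ramified.

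The paper avoids this entirely. After the substitution $t=1/r$, the identity $\widehat P(g)(r)\equiv r$ becomes the statement that a certain reduced word in the group of self-maps of $\mathbb C$ generated by the power maps $x\mapsto x^{p}$ and the translations $x\mapsto x+b$ equals the identity. That this group is the \emph{free product} of those two abelian subgroups is exactly S.\,Cohen's theorem~[C], which the paper invokes as a black box (the proof in~[C] runs to some seventy pages). A short Riemann-surface argument transports the germ identity at $r=0$ to the global identity $h(t)=t$ for the specific single-valued branch $h$ used in Cohen's formulation, and then Cohen's theorem gives the contradiction. Your outline, carried out rigorously, would amount to an independent proof of (the relevant cases of) Cohen's theorem; that is well beyond what branch-point chasing of the kind you sketch can deliver. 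The remaining assertions (``In particular'' and ``Moreover'') do follow formally once injectivity is known, as you indicate.
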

\begin{proof}
The proof of Theorem \ref{te1} is based on the deep result of S.\,Cohen [C].

Consider an irreducible word $g=g_{k_1}\cdots g_{k_l}\in G(X_{rect})$ where $g_{k_i}:=e^{(a_{k_i}T)\, X_{k_i}\,t^{k_i}}$
and $a_{k_i}\in\mathbb C^{*}\, (:=\mathbb C\setminus\{0\})$. We must show that $\widehat P(g)=\widehat P(g_{k_1})\circ\dots\circ \widehat P(g_{k_l})\neq 1$; here $1$ is the unit of the group $G_c[[r]]$. Assume, on the contrary, that $\widehat P(g)=1$. Then from equation \eqref{e2.2} for all $r\in\mathbb C$ sufficiently close to $0$ we obtain
\begin{equation}\label{e2.3}
\begin{array}{l}
\displaystyle
\widehat P(g)(r)\\
\\
\displaystyle
=\frac{r}{\left(\left(\dots\left(\left(1+ b_{k_l}r^{k_l}\right)^{\frac{k_{l-1}}{k_{l}}}+ b_{k_{l-1}}r^{k_{l-1}}\right)^{\frac{k_{l-2}}{k_{l-1}}}+\cdots \right)^{\frac{k_{1}}{k_{2}}}+b_{k_1}r^{k_1}\right)^{\frac{1}{k_1}}}\,.
\end{array}
\end{equation}
Here we set $b_{k_i}:=-k_i a_{k_i}T$.

Making the substitution $t=\frac{1}{r}$ from equations $\widehat P(g)=1$ and \eqref{e2.3} we get for all sufficiently large positive $t$
\begin{equation}\label{e2.4}
\left(\left(\dots\left(\left(t^{k_l}+ b_{k_l}\right)^{\frac{k_{l-1}}{k_{l}}}+ b_{k_{l-1}}\right)^{\frac{k_{l-2}}{k_{l-1}}}+\cdots \right)^{\frac{k_{1}}{k_{2}}}+b_{k_1}\right)^{\frac{1}{k_1}}=t
\end{equation}
Here from the irreducibility of $g$ we obtain that $k_{i}\neq k_{i+1}$ for all $1\leq i\le l-1$.

Consider the multi-valued algebraic function over $\mathbb C$ defined by the left-hand side of equation \eqref{e2.4}. Then there exist a connected Riemann surface $S$, a finite surjective holomorphic map $\pi:S\to\mathbb C$ and a (single-valued) holomorphic function $f$ defined on $S$ such that the pullback by $\pi^{-1}$ of the restriction of $f$ to a suitable open subset of $S$ corresponds to the branch of the original function satisfying \eqref{e2.4} defined on an open subset  of $\mathbb C$ containing a ray $[R,\infty)$ for $R$ sufficiently large. Equation \eqref{e2.4} implies that $f$ coincides with the pullback $\pi^*z$ of the function $z$ on $\mathbb C$.

Let $P$ be the abelian group of maps $\mathbb C\to\mathbb C$ generated by $\{x\mapsto x^p\, ;\,p\in\mathbb N\}$. Here the inverse of the complex map $x\mapsto x^p$ is the map $x\mapsto x^{1/p}$ which maps a nonzero $x$ onto the unique complex number $y$ with $0\le\arg y<\frac{2\pi}{p}$ such that $y^p=x$. Let $T_{\mathbb C}$ be the abelian group of the maps $\{x\mapsto x+a\, ;\, a\in\mathbb C\}$. Then Theorem 1.5 of [C] states that the group of complex maps of $\mathbb C$ generated by $P$ and $T_{\mathbb C}$ is their free product $P*T_{\mathbb C}$.

Next, the function $h:\mathbb C\to\mathbb C$ defined by the left-hand side of \eqref{e2.4} belongs to the group generated by $P$ and $T_{\mathbb C}$ (in the definition of $h$ we define the fractional powers as in the above cited theorem). In turn, by the definition of $S$ there exists a subset $U$ of $S$ such that $\pi:U\to\mathbb C$ is a bijection and
$f\circ(\pi|_{U})^{-1}=h$. Since $f=\pi^*z$, the latter implies that $h(t)=t$ for all $t\in\mathbb C$. But according to our assumptions the word in $P*T_{\mathbb C}$ representing $h$ is irreducible. Thus it cannot be equal to the unit element of this group.

This contradiction shows that $\widehat P(g)\neq 1$ and proves the first statement of the theorem. 

The other statements follow straightforwardly from the corresponding definitions. We leave the details to the reader.
\end{proof}
\begin{R}\label{re1}
{\rm Theorem \ref{te1} implies that $G[[r]]$ contains a dense subgroup isomorphic to the free product of countably many copies of $\mathbb C$ generated by series of form \eqref{e2.2}. In fact, the subgroup of $G[[r]]$ generated by series of form \eqref{e2.2} with $n=1$ and $n=2$ only is already dense in $G[[r]]$. It follows, e.g., from [Br2, Proposition 3.11].
}
\end{R}

\noindent {\bf 2.3.} Let $a\in X_{rect}$ be such that  $\widehat E(\pi(a))=e^{a_{k_{1}}X_{k_1}t^{k_1}}\cdots e^{a_{k_l}X_{k_l}t^{k_{l}}}\in G_{f}(X)$ for some $a_{k_{1}},\dots, a_{k_l}\in\mathbb C$, i.e., the path $\widetilde a: [0,T]\to\mathbb C^{\infty}$ consists of $l$ segments parallel to the coordinate axes $z_{k_1},\dots, z_{k_l}$ of $\mathbb C^{\infty}$.
Considering $a_{k_1},\dots, a_{k_l}$ as complex variables in $\mathbb C^l$ we obtain a family ${\cal F}$ of rectangular paths. The first return maps $\widehat P(a)$ of elements of ${\cal F}$ can be computed by expanding the functions $e^{a_{k_{1}}X_{k_1}t^{k_1}}\cdots e^{a_{k_l}X_{k_l}t^{k_{l}}}$ in infinite series in variables $X_{k_s}t^{k_s}$, $1\le s\le l$, then replacing each $X_{k_s}$ by $DL^{k_{s}-1}$ where $D$ and $L$ are the differentiation and the left translation in the algebra of formal power series $\mathbb C[[z]]$, and then evaluating the resulting series in $D, L, t$ at elements $z^p$, see [Br1] for similar arguments. As a result we obtain (with $t$ substituted for $r$)
\begin{equation}\label{e2.5}
\begin{array}{l}
\displaystyle
\widehat P(a)(r)=r+\sum_{i=1}^{\infty}\left(\,\sum_{k_{1}s_1+\cdots +k_{l}s_l=i} q_{k_1;s_1,\dots, k_l;s_l}(i)\,\frac{a_{k_1}^{s_1}}{s_1!}\cdots \frac{a_{k_{l}}^{s_l}}{s_l!}\right)r^{i+1}\\
\\
\displaystyle
q_{k_{1};s_1,\dots, k_{l};s_l}(i):=\prod_{n=0}^{l-1}
\left\{\prod_{j=0}^{s_{n+1}}\left(i-\sum_{m=0}^{n}s_mk_{m} -jk_{n+1}+1\right)\right\}
\end{array}
\end{equation}
(here we set for convenience $s_0k_0:=0$).

By $c_i(a_{k_1},\dots, a_{k_l})$ we denote the coefficient at $r^{i+1}$ of $\widehat P(a)$. It is a holomorphic polynomial on $\mathbb C^l$.
The center set $C$ of equations \eqref{e1} corresponding to the family ${\cal F}$ is the intersection of sets of zeros $\{(a_{k_1},\dots, a_{k_l})\in\mathbb C^l\, ;\, c_{i}(a_{k_1},\dots, a_{k_l})=0\}$ of all polynomials $c_i$. According to Theorem \ref{te1} $(a_{k_1},\dots, a_{k_l})\in C$ if and only if the word $e^{a_{k_{1}}t^{k_1}X_{k_1}}\cdots e^{a_{k_l}t^{k_{l}}X_{k_l}}=I$ in ${\cal A}$. Since the groups generated by elements $e^{a_{k_{p_1}}t^{k_{p_1}}X_{k_{p_1}}},\dots , e^{a_{k_{p_m}}t^{k_{p_m}}X_{k_{p_m}}}$ with mutually distinct $X_{k_{p_j}}$ and nonzero numbers $a_{k_{p_j}}$ are free, the last equation implies that $C$ is the union of finitely many complex subspaces of $\mathbb C^{l}$. (For instance, if all $k_{j}$ are mutually distinct, then $C=\{0\}\subset\mathbb C^l$.)

Our next result gives an effective bound on the number of coefficients in \eqref{e2.5} determining the center set $C$.
\begin{Th}\label{te2}
The set $C\subset\mathbb C^{l}$ is determined by equations
$c_1=0,\dots, c_{d+1}=0$ where
$$
d:=\prod_{i=1}^{l-1}\frac{k_{i}}{gcd(k_i,k_{i+1})}
$$
(here $gcd(n,m)$ is the greatest common divisor of natural numbers $n$ and $m$).
\end{Th}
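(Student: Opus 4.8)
The plan is to reduce the assertion to a non‑vanishing statement for a single algebraic function and then to close it by a zeros‑equal‑poles count on a Riemann surface, in the spirit of the proof of Theorem~\ref{te1}. The inclusion $C\subseteq\{c_1=0,\dots,c_{d+1}=0\}$ is immediate: by Theorem~\ref{te1} a point of $C$ corresponds to an element of $G(X_{rect})$ equal to $I$, whose first return map is the identity, so all $c_i$ vanish there. For the reverse inclusion, fix a point with $c_1=\dots=c_{d+1}=0$, corresponding to $g=e^{a_{k_1}X_{k_1}t^{k_1}}\cdots e^{a_{k_l}X_{k_l}t^{k_l}}$, set $b_{k_i}:=-k_ia_{k_i}$, and recall from \eqref{e2.2}--\eqref{e2.3} that $\widehat P(g)(r)=r\,\Phi_1(r)^{-1/k_1}$, where $\Phi_l(r):=1+b_{k_l}r^{k_l}$ and $\Phi_j:=\Phi_{j+1}^{k_j/k_{j+1}}+b_{k_j}r^{k_j}$ for $j=l-1,\dots,1$. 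Each $\Phi_j$ is a power series in $r$ with constant term $1$, so the fractional powers are unambiguous. Introduce $W_0:=1$ and $W_j:=(W_{j-1}-b_{k_j}r^{k_j})^{k_{j+1}/k_j}$, $j=1,\dots,l-1$. Using $\Phi_{j+1}=(\Phi_j-b_{k_j}r^{k_j})^{k_{j+1}/k_j}$ and $\Phi_l=1+b_{k_l}r^{k_l}$, a short induction shows that $\Phi_1\equiv1$ is equivalent to $W_{l-1}\equiv1+b_{k_l}r^{k_l}$; and the chosen point lies in $C$ iff $\widehat P(g)\equiv1$, i.e. iff $\Phi_1\equiv1$. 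So the point lies in $C$ iff $W_{l-1}\equiv1+b_{k_l}r^{k_l}$.

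The point of working with the $W_j$ is that $W_{l-1}$ is algebraic over $\mathbb C(r)$: at step $j$ the rational power $k_{j+1}/k_j$, after cancelling $\gcd(k_j,k_{j+1})$, requires only extracting a $(k_j/\gcd(k_j,k_{j+1}))$‑th root -- already $W_1^{k_1/\gcd(k_1,k_2)}=(1-b_{k_1}r^{k_1})^{k_2/\gcd(k_1,k_2)}\in\mathbb C[r]$ -- and propagating this through the tower bounds both the degree of $W_{l-1}$ over $\mathbb C(r)$ and the order of its pole over $r=\infty$ by $d=\prod_{i=1}^{l-1}k_i/\gcd(k_i,k_{i+1})$. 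Moreover, since $\widehat P(g)(r)=r+\sum_{i\ge1}c_ir^{i+1}$, the hypothesis $c_1=\dots=c_{d+1}=0$ is equivalent to $\widehat P(g)(r)-r=O(r^{d+3})$, hence to $\Phi_1(r)=(r/\widehat P(g)(r))^{k_1}=1+O(r^{d+2})$. I would then run exactly the same peeling‑off procedure on $\Phi_1=1+O(r^{d+2})$ instead of on the exact $\Phi_1=1$: because each $W_{j-1}-b_{k_j}r^{k_j}$ is a unit in $\mathbb C[[r]]$, the error $O(r^{d+2})$ is transported unchanged, giving $\Phi_{j+1}=W_j+O(r^{d+2})$ for every $j$; for $j=l-1$, together with the exact identity $\Phi_l=1+b_{k_l}r^{k_l}$, this yields that the algebraic function $\phi:=W_{l-1}-(1+b_{k_l}r^{k_l})$ vanishes to order at least $d+2$ at $r=0$.

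Finally I would conclude that $\phi\equiv0$. Viewing $\phi$ as a single‑valued meromorphic function on the compact Riemann surface $S$ of the algebraic function $W_{l-1}$, it is holomorphic away from the points over $r=\infty$, where the total order of its poles is at most $d$ by the estimate above. Since a non‑constant meromorphic function on a compact Riemann surface has equally many zeros and poles, a zero of order at least $d+2>d$ is impossible unless $\phi\equiv0$. Hence $W_{l-1}\equiv1+b_{k_l}r^{k_l}$, the chosen point lies in $C$, and $C=\{c_1=0,\dots,c_{d+1}=0\}$.

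The step I expect to be the main obstacle is the bookkeeping behind the two appearances of $d$ in the middle paragraph: computing exactly the degree of $W_{l-1}$ over $\mathbb C(r)$ and the pole order of $\phi$ over $r=\infty$, and checking that cancellations among the leading terms at infinity never enlarge these quantities past $d$. This is precisely where the savings by $\gcd(k_i,k_{i+1})$ are essential, and where Theorem~\ref{te1} and the freeness of $G(X_{rect})$ enter: they identify $C$ as a finite union of linear subspaces, and one must see that the linear equations cutting out these components correspond to coefficients of $\Phi_1$ of order at most $d+1$, which is the arithmetic content of the bound. The remaining manipulations -- of power series, of branches, and the reduction via \eqref{e2.3} -- are routine, all branch choices being legitimate because every series involved has constant term $1$ near the origin.
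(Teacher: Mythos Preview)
Your route is genuinely different from the paper's. The paper never passes to a compact Riemann surface or to a zeros-equals-poles count; instead it uses the quasi-homogeneity $c_i(z^{k_1}a_{k_1},\dots,z^{k_l}a_{k_l})=z^{i}c_i(a_{k_1},\dots,a_{k_l})$ to restrict the parameters to the unit ball, observes that $\widehat P(a)$ is then holomorphic on a fixed disc $\Di_R$, asserts from the composite formula \eqref{e2.3} that the single-valued function $f(a)(r):=(\widehat P(a)(r)-r)/r$ is at most $d$-valent on $\Di_R$, and concludes via Hayman's inequality for $p$-valent functions, which bounds $\sup_{\Di_{R/2}}|f(a)|$ by a constant times $\max_{1\le k\le d+1}|c_k|$.

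In your argument, the construction of the $W_j$, the equivalence $\Phi_1\equiv1\Leftrightarrow W_{l-1}\equiv 1+b_{k_l}r^{k_l}$, the error transport $\Phi_{j+1}=W_j+O(r^{d+2})$, and the tower bound $[\mathbb C(r,W_1,\dots,W_{l-1}):\mathbb C(r)]\le d$ are all fine. The genuine gap is the pole bound, and it is not merely bookkeeping: the claim that $\phi=W_{l-1}-(1+b_{k_l}r^{k_l})$ has total pole order at most $d$ over $r=\infty$ is false. Already for $l=2$, writing $g=\gcd(k_1,k_2)$, one has $W_1^{k_1/g}=(1-b_{k_1}r^{k_1})^{k_2/g}$, so over $\infty$ there are $k_1/g=d$ unramified sheets and on each of them $W_1\sim \zeta\,(-b_{k_1})^{k_2/k_1}r^{k_2}$ for a $(k_1/g)$-th root of unity $\zeta$; subtracting $b_{k_2}r^{k_2}$ does not cancel this leading term for generic $(b_{k_1},b_{k_2})$, and the total pole order of $\phi$ equals $k_2d$, not $d$. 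In general the pole order is of size $k_l d$, so your zeros-equals-poles argument only forces $\phi\equiv0$ once the zero at $r=0$ has order exceeding $k_l d$, which would require far more vanishing coefficients than $c_1,\dots,c_{d+1}$. You have in effect identified the degree of the covering $S\to\mathbb P^1$ (which is $\le d$) with the degree of the meromorphic map $\phi:S\to\mathbb P^1$ (which is not). Neither Theorem~\ref{te1} nor the freeness of $G(X_{rect})$ supplies the missing bound; the paper does not invoke either of them in its proof of this theorem.
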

\begin{proof}
Since $c_{i}(z^{k_1}a_{k_1},\dots, z^{k_l}a_{k_l})=z^{i}c_i(a_{k_1},\dots, a_{k_l})$, $i\in\mathbb N$,
it suffices to prove that $C\cap B$ where $B\subset\mathbb C^l$ is the open unit Euclidean ball is determined by equations $c_1=0,\dots, c_{d+1}=0$. 

Next, there exists a positive number $R$ such that for each $(a_{k_1},\dots, a_{k_l})\in B$ the first return map $\widehat P(a)$ given by \eqref{e2.5} determines a holomorphic function on $\Di_R:=\{z\in\mathbb C\,;\, |z|<R\}$. On the other hand, according to \eqref{e2.2}, $\widehat P(a)$ is the composite of algebraic functions. Now, from formula \eqref{e2.3} we obtain straightforwardly that equation
$f(a)(r)=c$, $f(a)(r):=\frac{\widehat P(a)(r)-r}{r}$, has at most $d$ complex roots in $\Di_R$ (counted with their multiplicities), i.e., the valency of $f(a)$ on $\Di_R$ is at most $d$. From here and the result of Haymann [H, Theorem 2.3] we obtain
\begin{equation}\label{eq2.6}
\sup_{r\in\Di_{\frac{R}{2}}}|f(a)(r)|\le  e^{\frac{1+\pi^2 d}{2}}\cdot\frac{R^{d+1}-1}{R-1}\cdot\max_{1\leq k\leq  d+1}|c_k(a_{k_1},\dots, a_{k_l})|.
\end{equation}

This implies that if $c_1(a_{k_1},\dots, a_{k_l})=\dots =c_{d+1}(a_{k_1},\dots, a_{k_l})=0$, then $f(a)\equiv 0$, i.e., 
$(a_{k_1},\dots, a_{k_l})\in C$.

The proof is complete.
\end{proof}
\begin{R}\label{r2}
{\rm It follows from equations $c_{i}(z^{k_1}a_{k_1},\dots, z^{k_l}a_{k_l})=z^{i}c_i(a_{k_1},\dots, a_{k_l})$, $i\in\mathbb N$, inequality \eqref{eq2.6} and the Cauchy integral formula for derivatives of $f(a)$ that for any $k\in\mathbb N$ and all $\lambda\in\mathbb C^l$
$$
|c_{d+1+k}(\lambda)|\leq e^{\frac{1+\pi^2 d}{2}}\cdot\frac{R^{d+1}-1}{R-1}\cdot\left(\frac{2\sqrt{l}}{R}\right)^{d+k}\cdot (1+\|\lambda\|_2)^{d+k}
\cdot\max_{1\leq k\leq  d+1}|c_k(\lambda)|
$$
where $\|\cdot\|_2$ is the Euclidean norm on $\mathbb C^l$.\\
This and Proposition 1.1 of [HRT] imply that each polynomial $c_{d+1+k}$ belongs to the integral closure $\overline{I}$ of the polynomial ideal $I$ generated by $c_1,\dots, c_{d+1}$. 

An interesting question is: in which cases $\overline{I}=I$? (If this is true, then the (Bautin) polynomial ideal generated by all polynomials $c_i$ is, in fact, generated by
$c_1,\dots, c_{d+1}$ only.)
}
\end{R}

\end{document}